\newtheorem{thm}{Theorem}[section]
\newtheorem{conj}[thm]{Conjecture}
\newtheorem{claim}{Claim}
\theoremstyle{definition}
\makeatletter \@addtoreset{equation}{section} \makeatother
\def\={\;=\;}
\def\<{\;<\;}
\def\>{\;>\;}
\crefname{def}{Definition}{Definitions}
\Crefname{def}{Definition}{Definitions}
\crefname{clm}{Claim}{Claims}
\Crefname{clm}{Claim}{Claims}
\crefname{ineq}{Ineq.}{Ineqs.}
\Crefname{ineq}{Inequality}{Inequalities}
\crefname{rl}{Relation}{Relations}
\crefname{rl}{Relation}{Relations}
\crefname{case}{Case}{Cases}
\crefname{case}{Case}{Cases}
\begin{document}
\begin{sloppypar}
\title[An Ore-type Condition for Large $k$-factor and Disjoint Perfect Matchings]
{An Ore-type Condition for Large $k$-factor and Disjoint Perfect Matchings}

\author[H. Lu]{Hongliang Lu}
\address{(Hongliang Lu) School of Mathematics and Statistics, Xi'an Jiaotong University, 710049, Xi'an, P.\ R.\ China}
\email{luhongliang@mail.xjtu.edu.cn}
\author[B. Ning]{Bo Ning}
\address{(Bo Ning\footnote{Corresponding author.}) Center for Applied Mathematics, Tianjin University,
300072, Tianjin, P.\ R.\ China}
\email{bo.ning@tju.edu.cn}
\date{}

\keywords{perfect matching, Ore-type condition, degree sum,
factorization, Hamiltonian graph, regular graph, Karush-Kuhn-Tucker condition}
\subjclass[2010]{05C70}

\maketitle

\begin{abstract}
Win [\emph{J. Graph Theory} {\bf 6}(1982), 489--492] conjectured
that a graph $G$ on $n$ vertices contains $k$ disjoint perfect
matchings, if the degree sum of any two nonadjacent vertices
is at least $n+k-2$, where $n$ is even and $n\geq k+2$. In
this paper, we prove that Win's conjecture is true for
$k\geq n/2$, where $n$ is sufficiently large. To show this result,
we prove a theorem on $k$-factor in a graph under some Ore-type condition.
Our main tools include Tutte's $k$-factor theorem, the
Karush-Kuhn-Tucker theorem on convex optimization, and the
solution to the longstanding 1-factor decomposition conjecture.
\end{abstract}

\section{Introduction}
To study the existence of a certain type of subgraphs in
a graph is a common topic in graph theory. Maybe the most
well-known theorem is the one proved by Dirac \cite{D52} in 1952,
which is stated as every graph on $n$ vertices has a Hamilton
cycle if every vertex of the graph has degree at least $n/2$. Ore \cite{O60}
extended Dirac's theorem by considering the degree
sum of every pair of nonadjacent vertices in a graph. A graph $G$
is said to be of \textit{Ore-type-(k)} if for every pair of
nonadjacent vertices $x,y$, the degrees of $x,y$ satisfy the
inequality $d(x)+d(y)\geq |G|+k$. Ore \cite{O63} proved that
a graph is \emph{Hamiltonian-connected} if it is of Ore-type-1.
Graphs of Ore-type-$k$ were studied by Roberts \cite{R77}.
Since then, plenty of research was conducted on different graph
properties under Ore-type conditions and the variants,
such as $k$-linkedness \cite{KY08,FGJPPW12}, an equitable coloring
of a graph \cite{KK08}, $k$-ordered Hamiltonicity \cite{FGKLSS03},
and etc. Our note mainly concerns on the existence of disjoint
perfect matchings in a graph under the Ore-type degree condition.

In 1982, Win \cite{W82} posed the following conjecture on disjoint
perfect matchings in a graph of Ore-type-$(k-2)$.

\begin{conj}[Win \cite{W82}]
Let $n,k$ be two integers such that $1\leq k\leq n-2$ and $n$
be even. Let $G$ be a simple graph on $n$ vertices. If $G$ is of Ore-type-$(k-2)$,
then $G$ contains $k$ disjoint perfect matchings.
\end{conj}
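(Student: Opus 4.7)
The plan, for the regime $k\ge n/2$ hinted at in the abstract, is a two-step reduction. First I would establish that under the Ore-type-$(k-2)$ hypothesis the graph $G$ contains a $k$-regular spanning subgraph $H$. Second, since $H$ is a $k$-regular graph on an even number $n$ of vertices with $k\ge n/2$, I would invoke the recent resolution of the 1-factorization conjecture by Csaba, K\"uhn, Lo, Osthus and Treglown to decompose $H$ edge-disjointly into $k$ perfect matchings; transporting these matchings back into $G$ proves the conjecture. The ``sufficiently large $n$'' qualifier in the abstract is inherited from the hypothesis of the 1-factorization theorem, not from the first step.

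The heart of the argument is therefore the existence of the $k$-factor. I would argue by contradiction using Tutte's $k$-factor theorem: if $G$ has no $k$-factor, there are disjoint sets $S,T\subseteq V(G)$ with
\[
k|S| + \sum_{x\in T}\bigl(d_G(x)-k\bigr) - e_G(S,T) - q(S,T) \;\le\; -2,
\]
where $q(S,T)$ counts the appropriate odd components of $G-S-T$. Minimising $|S|+|T|$ among such witnesses forces standard structural consequences: each vertex of $T$ has degree at most $k-1$ in $G-S$, $T$ is an independent set, and the Ore-type hypothesis can be applied to all pairs of $T$-vertices and to $T$-vertex pairings with vertices in the odd components.

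With this in hand I would set up a constrained optimisation: take the $(G-S)$-degrees $d_1,\ldots,d_{|T|}$ of the $T$-vertices as continuous variables, subject to the pairwise Ore-type bounds $d_i+d_j\ge n+k-2-2|S|$, the individual bounds $0\le d_i\le k-1$, and the Tutte deficiency inequality above as objective. Applying the Karush-Kuhn-Tucker theorem characterises the extremum and forces most of the $d_i$ to concentrate on a common boundary value, reducing the problem to a short case analysis in the three integer parameters $|S|$, $|T|$ and $q(S,T)$. Feeding $k\ge n/2$ into each case should yield the numerical contradiction.

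The step I expect to be hardest is controlling $q(S,T)$. The naive bound $q(S,T)\le n-|S|-|T|$ is far too weak; one must use the Ore-type hypothesis to argue that each small odd component forces several high-degree vertices in its neighbourhood, so that the total contribution of such components to the deficiency is bounded. A secondary technical point is matching the parity conventions in Tutte's theorem with the Ore-type condition's shift by $k-2$, which subtly alters the arithmetic throughout; the KKT machinery is what allows one to absorb these shifts into a clean inequality rather than a proliferating case analysis.
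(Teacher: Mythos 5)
Your outline coincides with the paper's actual route: reduce to the existence of a $k$-factor via Tutte's theorem applied to an extremal witness $(S,T)$, use the Ore condition on carefully chosen nonadjacent pairs, settle the hardest subcase by a KKT/convexity argument on a quadratic form, and then decompose the resulting $k$-regular spanning subgraph into perfect matchings by the Csaba--K\"uhn--Lo--Osthus--Treglown $1$-factorization theorem (which is indeed the only place ``sufficiently large $n$'' is needed). However, two of your intermediate structural claims are false as stated and would break the optimization as you set it up. Minimising $|S|+|T|$ does not make $T$ an independent set, and it does not force $d_{G-S}(x)\le k-1$ for every $x\in T$: deleting a vertex from $T$ can merge many odd components of $G-S-T$, so the resulting change in $q(S,T)$ cannot be controlled well enough to extract either conclusion. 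Consequently the pairwise constraints $d_i+d_j\ge n+k-2-2|S|$ are available only for \emph{nonadjacent} pairs of $T$-vertices, and your $|T|$-variable program is over-constrained.

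The paper circumvents both problems. Its extremal choice (minimise $\eta$, then the union $U$ of odd components, then maximise the leftover) yields structural control on the \emph{odd-component} side -- every vertex $v$ of an odd component satisfies $e_G(v,T)\le k-1$ and $d_{G-S}(v)\ge k+1$, whence each odd component has at least $3$ vertices -- and this, together with Ore applied to pairs consisting of one vertex in an odd component and one in $T$, is what bounds $q(S,T)$ (the crux you correctly identified). On the $T$ side it uses only two parameters: $h_1=\min\{d_{G-S}(x):x\in T\}$ attained at $u_1$, and $h_2$ the minimum over $T\setminus N_T[u_1]$, so that the witnessing vertices $u_1,u_2$ are guaranteed nonadjacent; the case $h_1\ge k$ is eliminated by the Ore condition rather than by minimality, and the KKT machinery is applied to a three-variable convex quadratic in $(h_1,h_2,k)$ rather than to all $|T|$ degrees. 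Your plan is salvageable, but only after replacing the two false claims by this asymmetric treatment of $T$ versus the odd components.
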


For $k=1$, Win's conjecture is true by Ore's theorem \cite{O60}.
Win \cite{W82} further confirmed the conjecture for $k=2,3$.

On the other hand, the existence of perfect matchings in a graph
is closely related to the existence of Hamilton cycles in the same
graph. It is an easy observation that every Hamilton cycle in a graph
corresponds to a pair of disjoint perfect matchings in the graph, if
the order of the graph is even. Egawa  \cite{Egawa93} proved that: Let $k\geq 2$ be
an integer and $G$ be a graph. If $d_G(x)+d_G(y)\geq |G|$ for all non-adjacent
vertices $x,y$, $\delta(G)\geq 2k+1$, and $|G|\geq 8(2k-2)^2$, then $G$
has $k$-edge-disjoint Hamilton cycles. We say that a graph $G$ is a \emph{Fan $2k$-type graph},
if $d(u,v)=2$ implies that $\max\{d(u),d(v)\}\geq n/2+2k$.
Zhou \cite{Z93} conjectured that every $2k$-connected
Fan $2(k-1)$-type graph has $k$ pairwise disjoint Hamilton cycles, and also
confirmed this conjecture for $k=1,2$. Later, the general case $k\geq 3$
was finally finished by Li \cite{L00}. One can easily obtain partial results
on Win's conjecture from the results mentioned above.

However, to the best of our knowledge, Win's conjecture is still wide open now.
One of our results concerns Win's conjecture when $k$ is large in compare with $n$.

\begin{thm}\label{ThWinConj}
Win's conjecture is true for sufficiently large even $n$,
if $k\geq n/2$.
\end{thm}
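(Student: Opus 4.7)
The plan is to decouple the theorem into two steps. First, I would establish the purely structural statement that every graph $G$ of Ore-type-$(k-2)$ on an even number $n$ of vertices with $k\geq n/2$ contains a $k$-factor, i.e., a $k$-regular spanning subgraph $H$; this is precisely the intermediate result promised in the abstract. Second, once such an $H$ is produced, I would invoke the 1-factorization theorem of Csaba, K\"uhn, Lo, Osthus and Treglown, which for sufficiently large even $n$ decomposes every $k$-regular graph on $n$ vertices with $k\geq n/2$ into $k$ edge-disjoint perfect matchings. These matchings, viewed as subgraphs of $G$, are the $k$ disjoint perfect matchings demanded by Win's conjecture. Note that the restriction to large $n$ enters only at the second step; the $k$-factor step is purely combinatorial and holds for every even $n$ in the stated range.

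For the $k$-factor step I would apply Tutte's $k$-factor theorem, which asserts that $G$ has a $k$-factor if and only if
\[
\eta_G(S,T)\;:=\;k|S|-k|T|+\sum_{v\in T}d_{G-S}(v)-q_G(S,T)\;\geq\;0
\]
for every pair of disjoint subsets $S,T\subseteq V(G)$, where $q_G(S,T)$ is the number of components $C$ of $G-S-T$ with $e_G(C,T)+k|V(C)|$ odd. Arguing by contradiction, I would choose a pair $(S,T)$ with $\eta_G(S,T)<0$ minimizing $|S|+|T|$. Minimality supplies the usual structural information about $T$ and the odd components (roughly, that every vertex of $T$ has low degree in $G-S$ and each odd component contributes a controlled amount), and the Ore-type hypothesis contributes the quantitative bound $d_{G-S}(u)+d_{G-S}(v)\geq n+k-2-2|S|$ for each nonadjacent pair $u,v\in T$.

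The Karush--Kuhn--Tucker theorem enters as the analytic engine of this deduction. After the combinatorial reductions, the inequality $\eta_G(S,T)<0$ becomes an inequality among a small number of real parameters (such as $|S|$, $|T|$, the number and total size of odd components, and certain degree deficits) subject to the linear and convex constraints supplied by the Ore-type condition and by $k\geq n/2$. Relaxing integrality converts this into a smooth convex optimization problem on a polytope; KKT then identifies the extremum, and the remaining task is to verify that even at the extremum one has $\eta_G(S,T)\geq 0$, contradicting the assumed deficit.

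The principal obstacle I anticipate is the KKT case analysis: determining which constraints are active at the optimum (for instance, $T=\emptyset$, exactly one odd component, a vertex saturating the Ore bound, or the borderline $k=n/2$) and, in each resulting configuration, combining the first-order conditions with the parity restriction hidden inside $q_G(S,T)$ to force $\eta_G(S,T)\geq 0$. A secondary subtlety is the tightness of the bound $n+k-2$: the $-2$ is precisely the slack that the parity term can consume, so the inequalities must be tracked carefully enough not to lose this factor. Once the $k$-factor is secured, the 1-factorization theorem for dense regular graphs is invoked as a black box and no further combinatorial work is required.
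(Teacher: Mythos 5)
Your proposal follows the paper's proof exactly: reduce to the existence of a $k$-factor (the paper's Theorem \ref{Thkfactor}) and then apply the 1-factorization theorem of Csaba, K\"uhn, Lo, Osthus and Treglown to that $k$-regular spanning subgraph, noting $k\geq n/2\geq 2\lceil n/4\rceil-1$. Your sketch of the $k$-factor step --- Tutte's theorem, an extremal choice of $(S,T)$, the Ore-type bound on nonadjacent pairs, and a KKT/convexity argument to settle the resulting numerical inequality --- is also the paper's method, though of course the detailed case analysis remains to be carried out.
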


In this paper, instead of proving Theorem \ref{ThWinConj} directly, we firstly
prove our main result which focuses on the existence of large $k$-factors.

\begin{thm}\label{Thkfactor}
Let $n$ and $k$ be two integers such that $n\geq k+1\geq n/2+1$ and $kn$ be even.
Let $G$ be a graph on $n$ vertices. If $G$ is of Ore-type-$(k-2)$,
then $G$ contains a $k$-factor.
\end{thm}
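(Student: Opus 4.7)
The plan is to prove \cref{Thkfactor} by contradiction using Tutte's $k$-factor theorem. Recall that a graph $G$ of even order has a $k$-factor if and only if
\[
\eta(S, T) \;:=\; q(S, T) + k|T| - k|S| - \sum_{v \in T} d_{G-S}(v) \;\leq\; 0
\]
for every pair of disjoint $S, T \subseteq V(G)$, where $q(S, T)$ counts the components $C$ of $G - S - T$ for which $k|C| + e_G(C, T)$ is odd. Since $n$ is even, $\eta(S,T)$ always has the parity of $kn$ and so is even; a violation therefore satisfies $\eta(S, T) \geq 2$. Suppose $G$ has no $k$-factor and fix a violating pair $(S, T)$ chosen extremally, say with $|T|$ minimum and then $|S|$ minimum. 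The standard extremality arguments for Tutte-type problems yield structural restrictions, most importantly $d_{G-S}(v) \leq k - 1$ for every $v \in T$, together with control over the components of $G - S - T$ counted by $q(S, T)$ (typically, each such component is an odd clique, and in many cases a single vertex).

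The next step is to inject the Ore-type-$(k-2)$ hypothesis. From $d_{G-S}(v) \leq k - 1$ one gets $d_G(v) \leq k - 1 + |S|$ for each $v \in T$, so any two nonadjacent $u, v \in T$ satisfy $2(k - 1 + |S|) \geq n + k - 2$, forcing $|S| \geq (n-k)/2$. Analogous bounds tie vertices of the bad components of $G - S - T$ to vertices of $S \cup T$: such a vertex $w$ is nonadjacent to most of $V \setminus (S \cup T)$, so its non-neighbours must contribute large degrees. Coupled with the identity $|S| + |T| + |U| = n$ (where $U := V \setminus (S \cup T)$) and the trivial bound $q(S, T) \leq |U|$, the inequality $\eta(S, T) \geq 2$ becomes a finite system of linear inequalities in the parameters $s = |S|$, $t = |T|$, $q = q(S, T)$ and in the average degrees inside $S$, $T$, and the bad components.

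The final step, which is the technical heart of the proof, is to show that this system is infeasible in the range $n/2 \leq k \leq n - 1$. Relaxing the parameters to real numbers, we view the problem as maximising $\eta(S, T)$ subject to the Ore-type and Tutte-type linear constraints and invoke the Karush-Kuhn-Tucker conditions to locate the critical configurations. The main obstacle is combinatorial rather than analytic: distinct extremal configurations -- $T$ independent versus $T$ containing an edge, all bad components singletons versus of varying sizes, $k$ even versus $k$ odd -- activate different constraint sets and produce several KKT subproblems, and in each one must verify directly that $\eta \leq 0$. Once every such regime has been ruled out, the assumption $\eta(S, T) \geq 2$ is untenable, contradicting the choice of $(S,T)$ and proving \cref{Thkfactor}.
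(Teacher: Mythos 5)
Your high-level strategy coincides with the paper's: assume no $k$-factor, take an extremal Tutte pair $(S,T)$, feed the Ore-type condition into the resulting inequalities, and settle the final numerical system via a case analysis that invokes the Karush--Kuhn--Tucker conditions. However, the structural facts you extract from extremality are not the ones that actually hold, and two of them are load-bearing. First, the assertion that $d_{G-S}(v)\le k-1$ for every $v\in T$ does not follow from minimising $|T|$ and then $|S|$ (removing $v$ from $T$ can decrease $q(S,T)$ by up to the number of components adjacent to $v$, so the change in $\eta$ is not controlled by $k-d_{G-S}(v)$ alone). The paper does not claim this; it sets $h_1:=\min\{d_{G-S}(x):x\in T\}$, disposes of $h_1\ge k$ as a separate case, and thereafter only knows $h_1\le k-1$ for the minimising vertex $u_1$. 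Second, your description of the bad components as odd cliques, ``in many cases a single vertex,'' points in the wrong direction: the extremal choice in Lemma~\ref{factor-prop} yields $e_G(v,T)\le k-1$ and $d_{G-S}(v)\ge k+1$ for every $v$ in a $k$-odd component, whence $d_{C_i}(v)\ge 2$ and $|C_i|\ge 3$ (and in Case~2 even $|C_i|\ge k-h_1+2$). These \emph{lower} bounds on component sizes are what make the counting $n\ge s+t+3q$ work; without them the trivial bound $q\le |U|$ you propose is far too weak to close the argument.

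A second gap is where the Ore condition is applied. Using it only for nonadjacent pairs inside $T$ gives nothing when $T$ contains no such pair; this is exactly the paper's Case~2 ($T=N_T[u_1]$), which consumes a substantial part of the proof and requires producing nonadjacent pairs between $u_1$ and vertices of the odd components (Claim~\ref{non-adja}), as well as pairs taken from two distinct components (Case~1). Your sketch acknowledges that vertices of bad components must be paired with non-neighbours but does not say how to guarantee such non-neighbours exist or how the resulting degree bounds interact with $q$, $s$, $t$, $h_1$, $h_2$; this is precisely the combinatorial content of Claims~\ref{Claim-kcon}--\ref{claim-Uu1}. As written, the proposal is a plausible outline of the same method rather than a proof: the infeasibility of ``the system'' cannot be verified because the correct system has not yet been derived.
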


With the help of Theorem \ref{Thkfactor}, we will use the solution
to 1-factor decomposition conjecture to prove
Theorem \ref{ThWinConj}. Recall that the long-standing 1-factorization conjecture states
that every regular graph of sufficiently large degree has a 1-factorization.
It was first stated explicitly by Chetwynd and Hilton \cite{CH85,CH89}, and they
also stated by Dirac, who discussed it in the 1950s.
Partial results were obtained by Chetwynd and Hilton \cite{CH85,CH89}, and
Zhang and Zhu \cite{ZZ92}. Recently, Csaba et al. \cite{CKLOT} confirmed
this conjecture for large graphs. One of their main results in \cite{CKLOT}
is used for our proof of Theorem \ref{ThWinConj}.

\begin{thm}[Csaba et al. \cite{CKLOT}]\label{Th:Csabeetal}
Suppose that $n$ is sufficiently large and even, and $D\geq 2\lceil n/4\rceil-1$.
Then every $D$-regular graph $G$ on n vertices has a decomposition into
perfect matchings.
\end{thm}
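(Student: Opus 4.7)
The plan is to follow the general strategy of Csaba, K\"uhn, Lo, Osthus and Treglown: split the argument into an \emph{extremal regime} (where $G$ resembles a critical tight example for the 1-factorization conjecture) and a \emph{non-extremal regime} (everything else), and deploy the absorbing method in the latter.

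\textbf{Extremal regime.} The tight example to keep in mind is a near-partition $V(G)=A\cup B$ with $|A|\approx|B|\approx n/2$, $G[A]$ and $G[B]$ nearly complete, and the bipartite graph between $A$ and $B$ sparse. If $G$ is within an $\varepsilon$-perturbation of such a structure, I would argue directly and structurally: every perfect matching must match the same number of $A$-vertices to $B$-vertices, so one first fixes a set of crossing edges dedicated to each matching, then uses the near-completeness of $G[A]$ and $G[B]$ to greedily complete the matching inside each side. The parities of $|A|$ and $|B|$ have to be tracked carefully, since $n$ is even and the threshold $D=2\cl{n/4}-1$ is tight exactly in this configuration.

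\textbf{Non-extremal regime.} Here one can show that $G$ is a robust expander: every $S\ss V(G)$ with $\varepsilon n\lle|S|\lle(1-\varepsilon)n$ has a large neighbourhood even after the deletion of $o(Dn)$ edges. The argument then has three phases. \emph{(i) Absorbing reserve.} Select a small random subgraph $H\ss G$ with the following flexibility property: for any near-perfect matching $M$ of $G\setminus H$ covering all but $o(n)$ vertices, one can find edges in $H$ to augment $M$ into a perfect matching using short alternating paths (``absorbing gadgets''). A standard probabilistic/union-bound argument, combined with the robust expansion of $G$, produces such $H$. \emph{(ii) Approximate decomposition.} Iteratively pull near-perfect matchings out of $G\setminus H$: apply the regularity lemma, transfer the matching problem to the reduced graph, use Hall-type conditions (justified by the robust expansion of the current remainder) to find a matching that covers all but $o(n)$ vertices, lift it back to $G\setminus H$, delete it, and repeat for $D-o(n)$ rounds. \emph{(iii) Absorption.} Combine the residual unmatched vertices of each near-perfect matching with absorbing gadgets in $H$ to complete them to genuine perfect matchings, and decompose whatever remains of $H$ (plus a small correction) into the final $o(n)$ perfect matchings.

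\textbf{Main obstacle.} The delicate step is (ii): one needs to preserve enough regularity and expansion to keep extracting near-perfect matchings through $\Omega(D)$ rounds. At exactly $D=2\cl{n/4}-1$ the theorem is sharp — two copies of $K_{n/2}$ with a small linking gadget give a near-regular graph of degree $\approx n/2-1$ that has no perfect matching at all — so the extremal/non-extremal dichotomy in the previous step must be calibrated precisely so that the non-extremal hypothesis guarantees expansion that \emph{persists} under iterative deletion. Executing this calibration, which is essentially a robust form of Tutte's $1$-factor theorem in the expander setting, is the technical heart of the argument and the step I would expect to consume the bulk of the work.
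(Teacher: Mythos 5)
The statement you were asked to prove is not proved in this paper at all: Theorem \ref{Th:Csabeetal} is one of the main results of the memoir of Csaba, K\"uhn, Lo, Osthus and Treglown \cite{CKLOT}, quoted verbatim and used as a black box (the authors invoke it exactly once, at the end, to decompose the $k$-factor supplied by Theorem \ref{Thkfactor} into perfect matchings). So there is no internal proof to compare yours against; the only fair benchmark is the cited memoir, whose proof of this statement runs to well over a hundred pages.

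Measured against that, your proposal correctly identifies the architecture of the actual argument --- the extremal/non-extremal dichotomy, robust expansion, an approximate decomposition followed by absorption --- but it is a table of contents, not a proof. Every load-bearing step is deferred to a ``standard argument'' that is in fact the hard part. The absorbing reserve $H$ must repair $D-o(n)$ near-perfect matchings \emph{simultaneously} with edge-disjoint gadgets, which no union bound delivers for free. Phase (ii) must extract $\Omega(n)$ edge-disjoint near-perfect matchings while certifying that expansion survives the deletion of a constant fraction of \emph{all} edges of $G$; this is precisely the robust decomposition machinery that occupies the bulk of \cite{CKLOT}, and ``apply the regularity lemma and repeat'' does not establish it. Phase (iii) ends with ``decompose whatever remains of $H$ (plus a small correction) into the final $o(n)$ perfect matchings,'' which is circular: the leftover is again a (near-)regular graph of positive degree, and decomposing \emph{it} into perfect matchings is an instance of the theorem being proved unless the leftover has been engineered in advance to be decomposable --- which is the whole point of the construction you are omitting. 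The extremal case likewise requires a full edge-decomposition, not merely many disjoint matchings obtained greedily, and the parity bookkeeping at the sharp threshold $D=2\lceil n/4\rceil-1$ is delicate. In short, the outline is a faithful sketch of the known strategy, but none of its steps is actually carried out, so it cannot stand in for a proof of the statement.
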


The proof of our main theorem also uses a theorem of Katerinis and Woodall on
$k$-factor, and the Karush-Kuhn-Tucker theorem on convex optimization. We will
introduce all necessary terminology and additional
results in the next section.

Now we give some necessary notation and terminology. Let $G$ be a graph. We use $V(G)$ and
$E(G)$ to denote the vertex set and edge set of $G$, respectively,
and denote by $|G|=|V(G)|$. Let $S,T$ be two disjoint subsets of
$V(G)$, $E_G(S,T)$ be the set of edges between $S$ and $T$
in $G$, and $e_G(S,T)=|E_G(S,T)|$. When $S$ consists of a single
element, say $S=\{v\}$, we use $E_G(v,T)$ and $e_G(v,T)$
instead of $E_G(\{v\},T)$ and $e_G(\{v\},T)$, respectively.
Let $v\in V(G)$ and $H$ be a subgraph of $G$. $N_G(v)$ is the set
of neighbors of $v$ in $G$ and $d_G(v)=|N_G(v)|$. Set $N_H(v)=N_G(v)\cap V(H)$
and $d_H(v)=|N_H(v)|$. When there is no danger of ambiguity, we use
$d(v)$ instead of $d_G(v)$ for short. Let $S\subset V(G)$ and let $G-S$ denote the
subgraph of $G$ induced by $V(G)\backslash V(S)$. If $S$ consists of
only one vertex, say $S=\{v\}$, we use $G-v$ instead of $G-\{v\}$.
For notation and terminology not defined here, we refer the reader to
Bondy and Murty \cite{BM08}.

The organization of our paper is as follows. In Section 2, we
introduce necessary preliminaries.
In Section 3, we prove Theorems \ref{ThWinConj} and \ref{Thkfactor}.

\section{Some preliminaries}
In this section, we first introduce some notation and
terminology related to Tutte's $k$-factor
theorem. For any pair of disjoint subsets $S,T\subset V(G)$, a component
$C$ of $G-S-T$ is called a \textit{$k$-odd-component}
if
$$
e_G(V(C),T)+k|V(C)|\equiv 1~~~\pmod 2.
$$
We usually use $q(S,T)$ to denote the number of components
of $G-S-T$ which are $k$-odd components.

Tutte's $k$-factor theorem is well known.
\begin{thm}[Tutte \cite{T52}]\label{Tutte52}
Let $k$ be a positive integer. A graph $G$ contains no $k$-factor
if and only if there exist disjoints subsets $S,T\subset V(G)$,
such that
\begin{align}
\eta(S,T):=k|S|-k|T|+\sum_{x\in T}d_{G-S}(x)-q(S,T)\leq -2. \label{con:1}
\end{align}
\end{thm}

From Tutte's theorem, Katerinis and Woodall \cite{KW87} deduced the following.
It shall play an important role in our proof.

\begin{thm}[Katerinis and Woodall \cite{KW87}]\label{factor-prop}
Let $k\geq 1$ be an integer. If a graph $G$ contains no $k$-factor, then
there exist two disjoint subsets $S,T\subset V(G)$ such that there holds (2.1),
and
\begin{align}
& e_G(v,T)\leq k-1,~and \label{con:2}\\
& d_{G-S}(v)\geq k+1~for~all~v\in U,\label{con:3}
\end{align}
where $U$ denotes the union of all $k$-odd components of $G-S-T$.

\end{thm}

Our proof also uses tools from optimization.
An optimization problem of the form
\begin{equation}\label{con-op-pr}
 \left
\{\begin{array}{ll}
              \min \quad f(x),  & \hbox{}\\
             s.t.\quad\ g_i(x)\leq 0, & for~i=1,\cdots,m\hbox{}
             \end{array}
           \right.
\end{equation}
is called a \emph{convex optimization problem} if the functions
$f, g_1,\ldots, g_m: R_n\rightarrow R$ are all convex functions.
We need the Karush-Kuhn-Tucker theorem on convex optimization.
The following one is a direct corollory of Theorem 4.3.8 in \cite[pp.207]{BSS06}.

\begin{thm}[Karush-Kuhn-Tucker sufficient condition \cite{BSS06}]\label{Th-KKT}
Let $X$ be a nonempty open set in $R^n$, and let $f: R^n\longrightarrow R$,
$g_i:R^n\longrightarrow R$ for $i=1,\cdots,m$. Consider Problem P:
\begin{equation}\label{NLP}
 \left
\{\begin{array}{ll}
              \min \quad f(x),  & \hbox{}\\
             s.t.\quad\ g_i(x)\leq 0, & for~i=1,\cdots,m\hbox{} \\
              \quad\quad\ x\in X, & \hbox{}
             \end{array}
           \right.
\end{equation}
Let $\overline{x}$ be a local optimal solution. There exist scalars $u_i\geq 0$ for $1\le i\le m$
such that
\begin{align}\label{KKT-point}
\nabla f(\overline{x})+\sum_{i\in I}u_i\nabla g_i(\overline{x})=0.
\end{align}%

\end{thm}
The point satisfying (\ref{KKT-point}) is called a \emph{KKT point}.
For convex optimal problems, the KKT conditions are also sufficient
for optimality (see \cite[pp.773]{BSS06}.).
\begin{thm}\label{con-op}
For the convex optimal problem (\ref{con-op-pr}), every KKT point
is a global optimal solution.
\end{thm}

The next result is a well-known result on convex function.
\begin{thm}\label{Th-Convex}
Let $f(x)$ be a function on $R$, where $R$ is a convex set. Suppose that $f$
is twice differentiable and $f''$ is continuous. Then $f(x)$ is a convex function
if and only if its Hessian matrix is positive semi-definite on $R$.
\end{thm}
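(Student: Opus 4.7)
The plan is to prove the two directions by reducing to the familiar one-dimensional characterization of convexity. Along the way I would only need Taylor's theorem with integral remainder and the fact that, for a twice-differentiable function of one real variable, convexity is equivalent to having a non-negative second derivative.

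For the ``if'' direction, suppose the Hessian $\nabla^2 f$ is positive semi-definite throughout $R$. Fix any $x,y\in R$. Since $R$ is convex, the segment $[x,y]$ lies in $R$, and Taylor's theorem with integral remainder gives
\begin{equation*}
f(y) \;=\; f(x) + \nabla f(x)^{\!\top}(y-x) + \int_0^1 (1-t)\,(y-x)^{\!\top}\,\nabla^2 f\bg{x+t(y-x)}\,(y-x)\,dt.
\end{equation*}
The integrand is non-negative by the PSD assumption, so $f(y)\ge f(x)+\nabla f(x)^{\!\top}(y-x)$. This first-order condition, applied at $z=\lambda x+(1-\lambda)y$ to both endpoints $x$ and $y$ and then combined with the convex combination coefficients $\lambda$ and $1-\lambda$, yields $f(z)\le \lambda f(x)+(1-\lambda)f(y)$, i.e.\ $f$ is convex.

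For the ``only if'' direction, I would reduce to the one-variable case by a line restriction. Fix $x_0\in R$ (interior, since $R$ is open) and any direction $v\in\mathbb{R}^n$. For sufficiently small $|t|$ the point $x_0+tv$ remains in $R$, and so $g(t):=f(x_0+tv)$ is well-defined on an open interval around $0$. Convexity of $f$ implies convexity of $g$; twice differentiability of $f$ with continuous Hessian, together with the chain rule, gives $g''(t)=v^{\!\top}\nabla^2 f(x_0+tv)\,v$, which is continuous. Applying the scalar lemma ``convex and twice-differentiable $\Rightarrow$ second derivative $\ge 0$'' to $g$ at $t=0$ yields $v^{\!\top}\nabla^2 f(x_0)\,v=g''(0)\ge 0$. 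Since $v$ was arbitrary, $\nabla^2 f(x_0)$ is positive semi-definite, and since $x_0$ was arbitrary this holds throughout $R$.

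The only step that is not pure bookkeeping is the scalar lemma used in the second direction. The cleanest proof I would give is to show that for a convex $g\colon I\to\mathbb{R}$ the slope $\bigl(g(s)-g(r)\bigr)/(s-r)$ is non-decreasing in each argument, hence $g'$ is non-decreasing, and therefore $g''\ge 0$ wherever it exists. This is the main (though very mild) obstacle; everything else is a direct computation or an appeal to Taylor's theorem.
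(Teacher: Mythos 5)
Your argument is correct: the Taylor-with-integral-remainder computation for the ``if'' direction, the reduction to the first-order condition, and the line-restriction plus monotone-difference-quotient argument for the ``only if'' direction together constitute the standard textbook proof of the second-order characterization of convexity. Note, however, that the paper does not prove this statement at all --- it is quoted as ``a famous result on convex functions'' and the reader is referred to Boyd and Vandenberghe \cite{BV13} --- so there is no proof in the paper to compare against; your write-up simply supplies the standard argument that the cited reference contains (with the harmless implicit assumption, needed anyway for the differentiability hypotheses to make sense, that one works on the interior of the convex set).
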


For more information and details, we refer the reader to Boyd and Vandenberghe \cite{BV13}.

\section{Proofs of Theorems \ref{ThWinConj} and \ref{Thkfactor}.}
In this section, we will present the proofs of Theorems \ref{ThWinConj} and \ref{Thkfactor}.

\noindent
{\bf Proof of Theorem \ref{Thkfactor}.} We prove Theorem \ref{Thkfactor} by contradiction.
Suppose that $G$ contains no $k$-factors. By Theorem \ref{factor-prop}, we can choose
disjoint $S,T\subset V(G)$ satisfying (\ref{con:1}), (\ref{con:2}), and  (\ref{con:3}).
Define $s:=|S|$ and $t:=|T|$. Let $C_1,\dots, C_{q}$ be all $k$-odd components of $G-S-T$.
So, for every vertex $v\in V(C_i)$, $d_{G-S}(v)\geq k+1$
and $e_{G}(v,T)\leq k-1$, and this implies $d_{C_i}(v)\geq 2$. Thus, $|C_i|\geq 3$.

\begin{claim}\label{Claim-kcon}
$G$ is $k$-connected, and hence, the minimum degree $\delta(G)\geq k$.
\end{claim}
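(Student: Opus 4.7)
The plan is to establish $k$-connectivity by a standard vertex-cut argument exploiting the Ore-type degree sum, after which $\delta(G)\ge k$ follows immediately from $k$-connectivity.

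First I would dispose of the trivial case. If $G$ is complete, then $G$ is $(n-1)$-connected, and since the hypothesis gives $n\ge k+1$, we have $n-1\ge k$, so $G$ is $k$-connected. Hence I may assume $G$ has at least one pair of nonadjacent vertices, and I can use the Ore-type-$(k-2)$ condition nontrivially.

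Next I would argue by contradiction: suppose $G$ is not $k$-connected. Then there exists a vertex cut $W\subset V(G)$ with $|W|\le k-1$, so that $G-W$ splits into components $A_1,A_2,\dots$ with at least two parts. Pick $u\in A_1$ and $v\in A_2$; these two vertices are nonadjacent, so by the Ore-type-$(k-2)$ condition
\[
d(u)+d(v)\;\ge\; n+k-2.
\]
On the other hand, $u$ has neighbors only in $W\cup(A_1\setminus\{u\})$ and $v$ only in $W\cup(A_2\setminus\{v\})$, so
\[
d(u)+d(v)\;\le\;(|A_1|-1)+(|A_2|-1)+2|W|\;\le\;(n-|W|-2)+2|W|\;=\;n+|W|-2\;\le\;n+k-3,
\]
using $|A_1|+|A_2|+|W|\le n$. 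This contradicts the Ore-type bound, proving that $G$ is $k$-connected. Since $k$-connectivity on $n\ge k+1$ vertices implies $\delta(G)\ge k$, the claim follows.

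The argument is entirely routine; there is no real obstacle. The only mild point of care is the complete-graph case (where the Ore-type hypothesis is vacuous and must be handled separately) and making sure that the inequality $|A_1|+|A_2|\le n-|W|$ is tight enough to combine with $|W|\le k-1$ to yield the strict contradiction $n+k-3<n+k-2$.
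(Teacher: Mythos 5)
Your proof is correct and is essentially the same cut-set/degree-sum argument as the paper's (the paper shows directly that any cut-set $W$ satisfies $|W|\ge k$ via $n+k-2\le |C_1'|+|C_2'|-2+2|W|$ and $|C_1'|+|C_2'|+|W|\le n$). Your explicit handling of the complete-graph case and the final deduction $\delta(G)\ge k$ from $k$-connectivity with $n\ge k+1$ are small points of care that the paper leaves implicit.
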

\begin{proof}
Let $W$ be a cut-set of $G$ and let $C_1',C_2'$ be two components of $G-W$.
For $x\in V(C_1')$ and $y\in V(C_2')$, one can see that $xy\notin E(G)$, and thus
\[
n+k-2\leq d(x)+d(y)\leq |C_1'|+|C_2'|-2+2|W|.
\]
Notice that $n\geq |C_1'|+|C_2'|+|W|$. Hence, $|W|\geq k$,
and moreover, $\delta(G)\geq k$.
\end{proof}
Now we show that $T\neq \emptyset$. Otherwise, by
(\ref{con:1}) and Claim \ref{Claim-kcon}, we have
$q(S,\emptyset)\geq ks+2\geq k^2+2$. Thus,
$n\geq |U|+s+t\geq 3(k^2+2)+k\geq \frac{3}{4}n^2+\frac{1}{2}n+6$, which is impossible.

Set $h_1:=\min\{d_{G-S}(x):x\in T\}$. Let $u_1\in T$ such that
$d_{G-S}(u_1)=h_1$. Set $N_T[u_1]:=(N(u_1)\cap T)\cup\{u_1\}$.
For any vertex $x\in V(G)$, let $d_T(x)=|N_G(x)\cap T|$. If
$T-N_T[u_1]\neq \emptyset$, let $h_2:=\min\{d_{G-S}(x):x\in T-N_{T}[u_1]\}$
and choose $u_2\in T-N_T[u_1]$ such that $d_{G-S}(u_2)=h_2$.
As in the proof of Lemma \ref{factor-prop},
we still denote $U:=C_1\cup C_2\ldots \cup C_q$.

\begin{claim}
\begin{align}\label{cl:sh1k}
s+h_1\geq k.
\end{align}
\end{claim}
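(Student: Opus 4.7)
The plan is very short, since the inequality is essentially a degree-splitting observation combined with the minimum-degree bound already available to us.

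First, I would invoke Claim~3.1, which says that under the Ore-type-$(k-2)$ hypothesis the graph $G$ is $k$-connected, and in particular $\delta(G)\geq k$. I would then apply this to the specific vertex $u_1\in T$ chosen to realize the minimum $h_1=\min\{d_{G-S}(x):x\in T\}$.

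The key step is to split the neighborhood of $u_1$ in $G$ according to whether each neighbor lies in $S$ or in $V(G)\setminus S$. Since $u_1\notin S$, we have
\[
d_G(u_1) \;=\; |N_G(u_1)\cap S| + d_{G-S}(u_1) \;\leq\; |S|+h_1 \;=\; s+h_1,
\]
using only that $|N_G(u_1)\cap S|\leq |S|=s$ and the definition of $h_1$. Combining with $d_G(u_1)\geq \delta(G)\geq k$ immediately yields $s+h_1\geq k$, which is \eqref{cl:sh1k}.

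There is no real obstacle here: the only non-trivial input is the lower bound $\delta(G)\geq k$, which has already been established in Claim~3.1. Everything else is a routine degree accounting for the single vertex $u_1$.
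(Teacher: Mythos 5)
Your proof is correct, but it takes a slightly different (and in fact cleaner) route than the paper. The paper argues via $k$-connectivity: it observes that deleting $S\cup\bigl(N_G(u_1)\cap V(G-S)\bigr)$ disconnects $G$ (separating $u_1$ from the rest), so this set must have at least $k$ vertices, giving $s\geq k-h_1$. You instead use only the weaker consequence $\delta(G)\geq k$ of Claim~\ref{Claim-kcon}, together with the trivial decomposition $d_G(u_1)=|N_G(u_1)\cap S|+d_{G-S}(u_1)\leq s+h_1$. Both arguments rest on the same prior claim, but yours avoids the need to verify that the proposed separator really is a cut set --- the paper's argument implicitly requires that some vertex other than $u_1$ survives the deletion (i.e.\ that $V(G)\neq S\cup N_{G-S}(u_1)\cup\{u_1\}$), a degenerate case it does not address, whereas your degree count holds unconditionally. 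So your version is marginally more robust while proving exactly the same inequality; nothing downstream is affected.
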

\begin{proof}
Since $\delta(G)\ge k$, $s+h_1\ge d_{G}(u_1)\ge k$.
\end{proof}

In the following, we divide the proof into four
cases.

\case $h_1\geq k$.

By (\ref{con:1}), we have
\begin{align*}
q:=q(S,T)\geq k|S|-k|T|+2+\sum_{x\in T}d_{G-S}(x)\geq k|S|-k|T|+2+h_1|T|\geq ks+2\geq 2.
\end{align*}
This means that $G-S-T$ is disconnected. By Claim \ref{Claim-kcon}, $s+t\geq k$.
Notice that $k\geq n/2$. Since $|C_i|\geq 3$ for each $i=1,\ldots,q$, we infer that
$|U|\geq 3q\geq3( ks+2)$. If $|S|\geq 1$, then $n=|G|\geq|U|+s+t\geq 3(k+2)+s+t\geq 4k+6>n$,
a contradiction. Thus, $S=\emptyset$ and $t\geq k$. Since $q\geq 2$,
choose $x\in V(C_1)$ and $y\in V(C_2)$, and we have
\begin{align}
&n+k-2\nonumber\\
&\leq d(x)+d(y)\nonumber\\
&\leq |C_1|-1+|N_G(x)\cap T|+|C_2|-1+|N_G(y)\cap T|\nonumber\\
&\leq |C_1|+|C_2|+2k-4~~~\quad \mbox{(by~Theorem~\ref{factor-prop})}\nonumber\\
&\leq n-t+2k-4\nonumber\\
&\leq n+k-4,\nonumber
\end{align}
a contradiction.

Thus, in the following, assume that
\begin{align}\label{Case-1}
h_1\leq k-1.
\end{align}

\case $T=N_T[u_1]$.

\begin{claim}\label{non-adja}
For any $i\in \{1,\ldots,q\}$, there
exists $w_i\in V(C_i)$ such that $w_iu_1\notin E(G)$
\end{claim}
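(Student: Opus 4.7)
I would argue by contradiction: suppose that for some $i\in\{1,\ldots,q\}$, $V(C_i)\subseteq N_G(u_1)$. The goal is to derive a contradiction using the Ore-type degree hypothesis together with the structural conditions (\ref{con:1})--(\ref{con:3}) from Lemma~\ref{factor-prop} and the theorem's hypothesis $k\geq n/2$.

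First, I would derive a size bound on $C_i$. Being in Case~2 means $T=N_T[u_1]$, so $d_T(u_1)=t-1$. Combined with the contradiction assumption $V(C_i)\subseteq N(u_1)$, this gives $d_{G-S-T}(u_1)\geq |V(C_i)|\geq 3$, and hence
\[
h_1\;=\;d_{G-S}(u_1)\;=\;d_T(u_1)+d_{G-S-T}(u_1)\;\geq\;(t-1)+|V(C_i)|\;\geq\;t+2.
\]
Together with the standing assumption~(\ref{Case-1}) that $h_1\leq k-1$, this forces $|V(C_i)|\leq h_1-t+1\leq k-t$.

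Second, I would locate a non-neighbor $w$ of $u_1$ and apply the Ore hypothesis. Since $d(u_1)\leq s+h_1\leq s+k-1$, such a $w\in V(G)\setminus(N(u_1)\cup\{u_1\})$ exists whenever $d(u_1)<n-1$; the exceptional sub-case $d(u_1)=n-1$ must be excluded separately, by combining the forced inequality $s\geq n-k$ with $|V(C_i)|\geq 3$, the vertex budget $|S|+|T|+|U|\leq n$, Claim~\ref{Claim-kcon}, and $k\geq n/2$. In the generic case, Ore applied to $(u_1,w)$ gives $d(w)\geq n+k-2-d(u_1)\geq n-s-1$. Now $V(C_i)\subseteq N(u_1)$ and $T\setminus\{u_1\}\subseteq N(u_1)$ imply $w$ lies in some component $C'$ of $G-S-T$ distinct from $C_i$. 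If $C'$ is $k$-odd, then~(\ref{con:2}) and the confinement of $w$'s neighbors to $S\cup T\cup V(C')$ give $d(w)\leq s+(k-1)+(|V(C')|-1)$, so $|V(C')|\geq n-2s-k+2$; coupling this with the vertex-budget inequality $|V(C_i)|+|V(C')|+s+t\leq n$, $|V(C_i)|\geq 3$, and $k\geq n/2$ then produces a numerical contradiction. The parallel argument when $C'$ is non-$k$-odd uses the weaker bound $|N(w)\cap T|\leq t$ in place of~(\ref{con:2}), and still closes under $k\geq n/2$.

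The main obstacle is the bookkeeping in the case split: ruling out the universal-vertex sub-case $d(u_1)=n-1$ and tightening the non-$k$-odd sub-case both rely on carefully combining the Ore lower bound on $d(w)$ with the component-size upper bounds, and in both sub-cases the hypothesis $k\geq n/2$ is used crucially to let Ore's lower bound dominate the structural upper bounds coming from (\ref{con:2}) and the vertex budget.
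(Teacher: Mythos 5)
There is a genuine gap here, and also a missed short-cut that would have made your whole detour unnecessary. Your first step correctly derives $h_1=d_{G-S}(u_1)\geq (t-1)+|V(C_i)|$ from $T=N_T[u_1]$ and the contradiction hypothesis $V(C_i)\subseteq N(u_1)$, but you then abandon this inequality instead of closing it against condition (\ref{con:3}): for any $x\in V(C_i)$ one has $N_{G-S}(x)\subseteq (V(C_i)\setminus\{x\})\cup T$, hence $k+1\leq d_{G-S}(x)\leq |C_i|+t-1\leq h_1\leq k-1$, an immediate contradiction. This is exactly the paper's three-line proof; note that it uses neither the Ore condition nor $k\geq n/2$, only (\ref{con:3}) and (\ref{Case-1}).

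The detour you take instead does not close as described. First, the non-neighbour $w$ of $u_1$ that you produce need not lie in a component of $G-S-T$: vertices of $S$ are not required to be adjacent to $u_1$, so $w$ may lie in $S$, where the Ore bound $d(w)\geq n-s-1$ is vacuous and the component-based analysis is unavailable. Second, even when $w$ does lie in a $k$-odd component $C'$, the two bounds you propose to combine, namely $|C'|\geq n-2s-k+1$ (from Ore and (\ref{con:2})) and $|C'|\leq n-s-t-3$ (vertex budget with $|C_i|\geq 3$), yield only $s\geq t+4-k$, which is no contradiction since there is no useful upper bound on $s$ at hand. Similarly, in the sub-case $d(u_1)=n-1$ the ingredients you list ($s\geq n-k$, the vertex budget, $|C_i|\geq 3$, $k\geq n/2$) give only $t+3q\leq k$, again not a contradiction. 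In each of these places the missing input is precisely (\ref{con:3}), which is the real engine of the claim.
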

\begin{proof}
Suppose that there exists $j\in \{1,\ldots,q\}$,
such that $V(C_j)\subset N_{G-S}(u_1)$. Notice that
for $x\in V(C_j)$, $d_{G-S}(x)\geq k+1$, and
$N_{G-S}(x)\subset V(C_j)\cup T$. Then
by (\ref{Case-1}), $k-1\geq h_1=d_{G-S}(u_1)\geq |C_j|+|T|-1\geq d_{G-S}(x)\geq k+1$,
a contradiction.
\end{proof}

\begin{claim}
\begin{align}\label{bound-c_i}
|C_i|\geq k-h_1+2.
\end{align}
\begin{align}\label{bound-n}
n\geq s+t+q(k-h_1+2).
\end{align}
\end{claim}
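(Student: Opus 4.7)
The plan is to establish (\ref{bound-c_i}) via a degree count at a non-neighbour of $u_1$ inside $C_i$, and then to obtain (\ref{bound-n}) as an immediate consequence by summing over $i$.

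For (\ref{bound-c_i}), use Claim \ref{non-adja} to pick $w_i\in V(C_i)$ with $w_iu_1\notin E(G)$. The hypothesis of Case 2, namely $T=N_T[u_1]=(N(u_1)\cap T)\cup\{u_1\}$, gives $T\setminus\{u_1\}=N(u_1)\cap T$. Since $w_iu_1\notin E(G)$, every neighbour of $w_i$ in $T$ lies in $T\setminus\{u_1\}$, so
\[ |N_G(w_i)\cap T|\leq |N_G(u_1)\cap T|\leq d_{G-S}(u_1)=h_1, \]
where the last step uses $T\subseteq V(G)\setminus S$. On the other hand, $w_i\in U$, so condition (\ref{con:3}) of Lemma \ref{factor-prop} gives $d_{G-S}(w_i)\geq k+1$. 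Since $C_i$ is a component of $G-S-T$, the neighbours of $w_i$ in $V(G)\setminus S$ split into those in $T$ and those in $V(C_i)\setminus\{w_i\}$, hence
\[ d_{C_i}(w_i)=d_{G-S}(w_i)-|N_G(w_i)\cap T|\geq k+1-h_1. \]
Combining with $d_{C_i}(w_i)\leq|C_i|-1$ yields $|C_i|\geq k-h_1+2$, as required.

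For (\ref{bound-n}), note that $S$, $T$, and $U=V(C_1)\cup\cdots\cup V(C_q)$ are pairwise disjoint subsets of $V(G)$, so
\[ n\geq s+t+\sum_{i=1}^q|C_i|\geq s+t+q(k-h_1+2), \]
where the second inequality uses (\ref{bound-c_i}) applied to each $C_i$.

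The main subtlety lies in replacing the crude bound $|N_G(w_i)\cap T|\leq k-1$ coming from Lemma \ref{factor-prop}(\ref{con:2}) by the sharper $|N_G(w_i)\cap T|\leq h_1$, which is what makes the argument meaningful in the regime $h_1\leq k-1$ (see (\ref{Case-1})) defining Case 2. This sharpening uses crucially both the case hypothesis $T=N_T[u_1]$ and the non-adjacency $w_iu_1\notin E(G)$ supplied by Claim \ref{non-adja}.
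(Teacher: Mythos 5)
Your proof is correct and follows essentially the same route as the paper: both use Claim \ref{non-adja} to find a non-neighbour $w_i$ of $u_1$ in $C_i$, bound $d_T(w_i)\leq |T|-1=d_T(u_1)\leq h_1$ via the case hypothesis $T=N_T[u_1]$, combine with $d_{G-S}(w_i)\geq k+1$ from Lemma \ref{factor-prop} to get $|C_i|\geq k-h_1+2$, and then sum over the components. No discrepancies to report.
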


\begin{proof}
For each $i\in \{1,2,\ldots,q\}$, by Claim \ref{non-adja}, there exists a vertex
$x_i\in V(C_i)$ such that $x_iu_1\notin E(G)$. Since
$d_T(x_i)\leq |T|-1= d_T(u_1)\leq d_{G-S}(u_1)=h_1$,
we have
\begin{align*}
|C_i|&\geq |N_G(x_i)\cap V(C_i)|+1\\
&=d_{G-S}(x_i)-d_{T}(x_i)+1\\
&\geq (k+1)-h_1+1\\
&=k-h_1+2.
\end{align*}
Moreover, by (\ref{bound-c_i}), we can get
\begin{align*}
n=|G|\geq |S|+|T|+\sum_{i=1}^q|C_i|\geq s+t+q(k-h_1+2).
\end{align*}
\end{proof}

\begin{claim}\label{claim-q}
$q=q(S,T)\geq 2$.
\end{claim}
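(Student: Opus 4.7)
The plan is to argue by contradiction: assume $q=1$ and derive a numerical impossibility from three bounds already at hand in Case 2.

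First I would exploit the Case 2 hypothesis $T=N_T[u_1]$ to bound $t$ from above. Since $u_1$ is adjacent in $T$ to every other vertex of $T$, $d_T(u_1)=t-1$; because $d_{G-S}(u_1)=h_1\ge d_T(u_1)$, this yields $t\le h_1+1$.

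Next I would feed $q=1$ and the minimality bound $\sum_{x\in T}d_{G-S}(x)\ge h_1 t$ into Tutte's inequality (\ref{con:1}); rearrangement produces $(k-h_1)t\ge ks+1$. Combining this with $t\le h_1+1$ from the previous step and with $s\ge k-h_1$ from (\ref{cl:sh1k}), one obtains
\[
(k-h_1)(h_1+1)\;\ge\;(k-h_1)t\;\ge\;ks+1\;\ge\;k(k-h_1)+1,
\]
which rearranges to $(k-h_1)(h_1+1-k)\ge 1$. But (\ref{Case-1}) gives $h_1\le k-1$, so the first factor is $\ge 1$ while the second is $\le 0$; the product is therefore $\le 0$, a contradiction.

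The only real obstacle is bookkeeping: one has to use $h_1=\min_{x\in T}d_{G-S}(x)$ (to estimate $\sum d_{G-S}$ from below) and $h_1\ge t-1$ (forced by $T=N_T[u_1]$) simultaneously, and line these up against $s\ge k-h_1$ from $k$-connectivity. Once the three inequalities are stacked in a single display, no further combinatorial input is required and the contradiction is purely arithmetic.
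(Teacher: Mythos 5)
Your proof is correct and uses exactly the same three ingredients as the paper's: the lower bound $\sum_{x\in T}d_{G-S}(x)\ge h_1t$ fed into (\ref{con:1}), the bound $t\le h_1+1$ forced by $T=N_T[u_1]$, and $s\ge k-h_1$ from (\ref{cl:sh1k}); the paper merely chains them directly into $q\ge (k-h_1)(k-t)+2\ge 2$ instead of arguing by contradiction. The only cosmetic point is that you should assume $q\le 1$ rather than $q=1$, but the case $q=0$ yields an even stronger inequality and the identical contradiction.
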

\begin{proof}
By (\ref{Case-1}), the fact $T=N_T[u_1]$, and the
definition of $u_1$, we infer
\begin{align*}
d_T(u_1)=t-1\leq d_{G-S}(u_1)=h_1\leq k-1,
\end{align*}
Thus, $k\geq h_1+1\geq t$. So,
\begin{align}
q(S,T)&\geq ks-kt+\sum_{x\in T}d_{G-S}(x)+2\nonumber\\
&\geq ks-kt+h_1t+2\label{kskt}\\
&\geq k(k-h_1)-kt+h_1t+2\nonumber\\
&=(k-h_1)(k-t)+2\nonumber\\
&\geq 2.\label{2}
\end{align}
\end{proof}

\begin{claim}
\begin{align}\label{1s-bound2}
s\geq k+(q-1)(k+2-h_1)+t- 2h_1-1.
\end{align}
\end{claim}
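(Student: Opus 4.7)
The plan is to apply the Ore-type condition to the non-adjacent pair $(u_1,w_1)$, where $w_1\in V(C_1)$ is the non-neighbor of $u_1$ supplied by Claim \ref{non-adja}, extract a lower bound on $|C_1|$, combine it with the bound $|C_j|\ge k-h_1+2$ from \eqref{bound-c_i} for the remaining components, and then use the global count $n\ge s+t+\sum_{j=1}^{q}|C_j|$ together with the Case~2 hypothesis to derive \eqref{1s-bound2}.

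For $d(u_1)$ I use the trivial decomposition $d(u_1)=d_S(u_1)+d_{G-S}(u_1)\le s+h_1$. For $d(w_1)$, since $C_1$ is a component of $G-S-T$ I split $d(w_1)=d_S(w_1)+d_T(w_1)+d_{C_1}(w_1)$. The crucial observation is that $w_1$ is chosen non-adjacent to the vertex $u_1\in T$, whence $d_T(w_1)\le t-1$; this estimate is stronger than the generic bound $e_G(w_1,T)\le k-1$ coming from \eqref{con:2} and is exactly where the proof exploits the special role of $u_1$. Combined with $d_S(w_1)\le s$ and $d_{C_1}(w_1)\le |C_1|-1$ this yields $d(w_1)\le s+(t-1)+(|C_1|-1)$. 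Plugging these upper bounds into the Ore-type inequality $d(u_1)+d(w_1)\ge n+k-2$ gives
\[
|C_1|\;\ge\; n-2s-t-h_1+k.
\]

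Combining this with \eqref{bound-c_i} applied to $C_2,\dots,C_q$ and the count $n\ge s+t+\sum_{j=1}^{q}|C_j|$ produces
\[
n\;\ge\; s+t+(n-2s-t-h_1+k)+(q-1)(k-h_1+2),
\]
which rearranges to $s\ge k-h_1+(q-1)(k-h_1+2)$. The Case~2 hypothesis $T=N_T[u_1]$ forces $t-1=d_T(u_1)\le d_{G-S}(u_1)=h_1$, equivalently $k-h_1\ge k+t-2h_1-1$, and then \eqref{1s-bound2} follows immediately.

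I expect the only delicate step to be the tightening $d_T(w_1)\le t-1$ in place of the generic $d_T(w_1)\le k-1$: without exploiting the specific non-adjacency $w_1u_1\notin E(G)$ (with $u_1\in T$) one only obtains $s\ge (q-1)(k-h_1+2)+t-h_1$, which falls short of \eqref{1s-bound2} by exactly $k-h_1-1$. Once this sharper estimate is in place, the remainder is a routine arithmetic rearrangement.
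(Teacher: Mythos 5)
Your proof is correct and rests on the same ingredients as the paper's own argument: the non-neighbour $w_i$ supplied by Claim~\ref{non-adja}, the sharpened estimate $d_T(w_1)\le t-1$ (which you rightly flag as the essential point), the component bound \eqref{bound-c_i}, and the volume count $n\ge s+t+\sum_{i}|C_i|$. The only difference is bookkeeping: you apply the Ore inequality to the single pair $(u_1,w_1)$ and invoke \eqref{bound-c_i} for the remaining $q-1$ components, whereas the paper applies it to all $q$ pairs $(u_1,w_i)$ and then substitutes \eqref{bound-n}; your route in fact delivers the slightly stronger intermediate bound $s\ge k-h_1+(q-1)(k-h_1+2)$, from which \eqref{1s-bound2} follows via $t\le h_1+1$.
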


\begin{proof}
For any $i\in \{1,\ldots,q\}$,
since $w_iu_1\notin E(G)$, we have
\begin{align}\label{wu11}
d(w_i)+d(u_1)\geq n+k-2.
\end{align}
On the other hand, we obtain
$$
d_T(w_i)\leq t-1=d_T(u_1)\leq d_{G-S}(u_1)=h_1.
$$
One can see that
\begin{align}\label{wu12}
d(w_i)+d(u_1)\leq |C_i|-1+2h_1+2s.
\end{align}

Combining (\ref{wu11}) and (\ref{wu12}), we can infer
\begin{align*}
&n+q(2h_1+2s-1)\\
&\geq \sum_{i=1}^q|C_i|+q(2h_1+2s-1)+s+t\\
&\geq q(n+k-2)+s+t,
\end{align*}
that is,
\begin{align*}
(q-1)n\leq q(2h_1+2s-1-k+2)-s-t.
\end{align*}
By Claim \ref{claim-q}, $q\geq 2$.
By $(\ref{bound-n})$, we have
\begin{align*}
(q-1)(s+t+q(k-h_1+2))\leq q(2h_1+2s-k+1)-s-t.
\end{align*}
This implies
\begin{align*}
(q-1)(k+2-h_1)+s+t\leq 2h_1+2s-k+1,
\end{align*}
and this proves the claim.
\end{proof}

By computation, we have
\begin{align}
0&\geq 2+ks-kt+\sum_{x\in T}d_{G-S}(x)-q\nonumber\\
&\geq 2+k(k+(q-1)(k+2-h_1)- 2h_1-1)+h_1t-q\quad\nonumber \mbox{(by~(\ref{1s-bound2}))}\\
&=2+q(k(k+2-h_1)-1)-k(3+h_1)+h_1t\nonumber\\
&\geq 2+(ks-kt+h_1t+2)(k(k+2-h_1)-1)-k(3+h_1)+h_1t\quad\nonumber \mbox{(by~(\ref{Case-1}) and (\ref{kskt}))}\\
&\geq 2+(ks-(k-h_1)(h_1+1)+2)(k(k+2-h_1)-1)\break\nonumber \\
&-k(3+h_1)+h_1(h_1+1)\nonumber\\
&\geq 2+2(k(k+2-h_1)-1)-k(3+h_1)+h_1(h_1+1)\nonumber \\
&=2k^2+k-3kh_1+h_1^2+h_1\nonumber\\
&\geq 3k\nonumber,
\end{align}
where we have used the fact $k(k+2-h_1)-1\geq 3k-1\geq 0$ in the third inequality above;
and (\ref{cl:sh1k}), (\ref{Case-1}) and (\ref{kskt}) in the fifth inequality
above; and the fact $f(h_1)\geq f(k-1)$ in the last step,
where the function $f(x)=-3kx+x^2+x$, $x\leq k-1$.

This contradiction completes the proof of the case.

\case $T\neq N_T[u_1]$ and $h_2\geq k$.

Set $p:=|N_T[u_1]|$. Recall that $V(U)=V(C_1\cup \ldots \cup C_q)$.
We have

\begin{claim}\label{claim-q2}
$q(S,T)\geq 2$, where the
equality holds when $h_1=k-1$, $p=k$
and $h_2=k$.
\end{claim}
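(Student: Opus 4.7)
The plan is to bound $q(S,T)$ from below by combining the Tutte deficiency inequality~(\ref{con:1}) with the two-tier minimum degree information on $T$. I would partition $T=N_T[u_1]\sqcup (T\setminus N_T[u_1])$, whose parts have sizes $p$ and $t-p$ respectively; the second part is nonempty by the case hypothesis $T\neq N_T[u_1]$, so $u_2$ exists. By the definition of $h_1$, every $x\in N_T[u_1]$ contributes at least $h_1$ to $\sum_{x\in T}d_{G-S}(x)$; by the definition of $h_2$, every $x\in T\setminus N_T[u_1]$ contributes at least $h_2$; and using $h_2\geq k$ (the other case hypothesis) gives
\[
\sum_{x\in T}d_{G-S}(x)\;\geq\; p h_1+(t-p)h_2\;\geq\; p h_1+(t-p)k.
\]
Plugging this into~(\ref{con:1}) and simplifying yields the clean lower bound
\[
q(S,T)\;\geq\; 2+k(s-p)+p h_1.
\]

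Next I would verify that $k(s-p)+p h_1\geq 0$ from two elementary size bounds. Since $N_G(u_1)\cap T\subseteq N_{G-S}(u_1)$, we have $p=|N_G(u_1)\cap T|+1\leq h_1+1$. And~(\ref{cl:sh1k}) gives $s\geq k-h_1$, hence $ks\geq k(k-h_1)$. Because $h_1+1\leq k$ by~(\ref{Case-1}), these combine into the chain
\[
p(k-h_1)\;\leq\;(h_1+1)(k-h_1)\;\leq\;k(k-h_1)\;\leq\;ks,
\]
which rearranges to exactly $k(s-p)+p h_1\geq 0$, yielding $q(S,T)\geq 2$.

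For the equality clause, I would trace which inequalities above must be tight. Equality in $(t-p)h_2\geq (t-p)k$, given $t-p\geq 1$, forces $h_2=k$. In the chain $p(k-h_1)\leq (h_1+1)(k-h_1)\leq k(k-h_1)$, since $k-h_1\geq 1$ by~(\ref{Case-1}), equality forces both $p=h_1+1$ and $h_1+1=k$; hence $h_1=k-1$ and $p=k$. These are precisely the conditions recorded in the claim. (As a by-product, equality in $ks\geq k(k-h_1)$ would then fix $s=1$, which is consistent but not explicitly asserted.) The whole argument is a chain of elementary bounds, and the only place where care is needed is in keeping straight which block of $T$ receives the $h_1$ estimate and which receives the $h_2$ estimate; I do not expect a genuine obstacle.
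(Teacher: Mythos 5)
Your proof is correct and follows essentially the same route as the paper's: plug the bound $\sum_{x\in T}d_{G-S}(x)\geq ph_1+(t-p)h_2$ into the Tutte inequality, use $h_2\geq k$ and $t\geq p$ to reduce to $q\geq ks-(k-h_1)p+2$, and then conclude via $p\leq h_1+1$, $s\geq k-h_1$ and $h_1\leq k-1$. The only difference is cosmetic: you chain $p(k-h_1)\leq (h_1+1)(k-h_1)\leq k(k-h_1)\leq ks$ where the paper writes the same content as $ks-(k-h_1)p+2\geq (k-h_1)(k-h_1-1)+2\geq 2$, and you spell out the equality analysis that the paper dismisses as easily deduced.
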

\begin{proof}
By (\ref{con:1}), we have
\begin{align*}
q(S,T)&\geq k|S|-k|T|+\sum_{x\in T}d_{G-S}(x)+2\\
&\geq ks-kt+h_1p+h_2(t-p)+2.
\end{align*}
By the hypothesis $h_2\geq k$ and $t\geq p$, we obtain
\begin{align}\label{kst}
ks-kt+h_1p+h_2(t-p)+2
&\geq ks-(k-h_1)p+2.
\end{align}
By (\ref{cl:sh1k}), $s\geq k-h_1$. Since $s\geq k-h_1$
and $p\leq h_1+1$, we obtain $ks-(k-h_1)p+2\geq (k-h_1)(k-h_1-1)+2$.
By (\ref{Case-1}), $k\geq h_1+1$. So, $(k-h_1)(k-h_1-1)+2\geq 2$,
and hence $q(S,T)\geq 2$. The condition when the equality holds
can be deduced easily. This proves the claim.
\end{proof}

\begin{claim}\label{cl-s}
Suppose there exists a vertex $x\in V(U)$
such that $xu_1\notin E(G)$. Then
\begin{align}\label{3s-bound1}
s\geq k+3(q-1)-h_1.
\end{align}
\end{claim}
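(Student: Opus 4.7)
The plan is to apply the Ore-type-$(k-2)$ condition to the single non-edge $xu_1$ and then play the resulting lower bound on $|C_i|$ (where $x\in V(C_i)$) against the crude vertex-count bound coming from the partition $V(G)\supseteq S\cup T\cup U$ together with $|C_j|\ge 3$ for every~$j$.

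First I would fix $i$ with $x\in V(C_i)$ and collect two elementary degree estimates. For $u_1$, the definition of $h_1$ gives $d_{G-S}(u_1)=h_1$ and hence $d(u_1)\le s+h_1$. For $x$, since $C_i$ is a component of $G-S-T$, every neighbor of $x$ lies in $S\cup(V(C_i)\setminus\{x\})\cup T$; and since $u_1\in T\setminus N(x)$ we get the upper bound $d_T(x)\le t-1$. This yields
\[
d(x)\;\le\;s+|C_i|-1+(t-1)\;=\;s+|C_i|+t-2.
\]

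Plugging these two bounds into the Ore-type inequality $n+k-2\le d(x)+d(u_1)$ gives
\[
|C_i|\;\ge\;n+k-2s-t-h_1.
\]
I would then combine this with the trivial partition estimate $n\ge s+t+|C_i|+3(q-1)$, which uses only the already-established fact that $|C_j|\ge 3$ for every $k$-odd component. Substituting the lower bound for $|C_i|$, the $n$ and $t$ terms cancel and the desired inequality $s\ge k+3(q-1)-h_1$ falls out.

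There is essentially no serious obstacle here---the whole proof is a one-shot degree-counting argument. The only step that deserves attention is the choice to bound $d_T(x)$ by $t-1$ via $u_1\notin N(x)$ rather than by $k-1$ via \eqref{con:2}: without that substitution the $t$-term never enters the estimate for $|C_i|$, and the partition estimate would leave a stray $+t$ on the right-hand side that cannot be absorbed without extra information on~$t$.
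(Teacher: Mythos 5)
Your proposal is correct and is essentially the paper's own argument: both bound $d(u_1)\le s+h_1$ and $d(x)\le s+(|C_i|-1)+(t-1)$, apply the Ore condition to the non-edge $xu_1$, and combine with $n\ge s+t+|C_i|+3(q-1)$ so that $n$ and $t$ cancel. The only difference is the (algebraically equivalent) order in which the two inequalities are substituted into one another.
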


\begin{proof}
Without loss of generality, assume that $x\in V(C_i)$ for some
$i\in \{1,\ldots,q\}$. By Lemma \ref{factor-prop}, one may see
that $|C_i|\geq 3$ for $i=1,\ldots,q$. So we obtain
\begin{align*}
n\geq s+t+3q.
\end{align*}
We also have
\begin{align*}
n+k-2\leq d(x)+d(u_1)\leq (|C_i|-1)+(t-1)+s+h_1+s=2s+t+|C_i|+h_1-2.
\end{align*}
One can see that
\begin{align*}
|C_i|\leq n-s-t-3(q-1).
\end{align*}
Thus, we have
\begin{align*}
n+k-2\leq s+n+h_1-3(q-1)-2.
\end{align*}
This proves the claim.
\end{proof}

\begin{claim}\label{claim-Uu1}
$V(U)\subset N_G(u_1)$.
\end{claim}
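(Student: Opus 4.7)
The plan is to prove Claim~\ref{claim-Uu1} by contradiction, using Claim~\ref{cl-s} together with the lower bound on $q(S,T)$ extracted from \eqref{con:1} in the same way as in the proof of Claim~\ref{claim-q2}. So assume there is some $x\in V(U)$ with $xu_1\notin E(G)$. Then Claim~\ref{cl-s} gives
\[
s\gge k+3(q-1)-h_1.
\]

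Next, I would exploit the fact that in Case~3 we have $h_2\geq k$. As in the derivation of \eqref{kst}, this yields
\[
q\gge k|S|-k|T|+\sum_{x\in T}d_{G-S}(x)+2\gge ks-(k-h_1)p+2,
\]
using $h_1p+h_2(t-p)\geq h_1p+k(t-p)$ and $p=|N_T[u_1]|$. Now I would substitute the two elementary bounds $s\geq k+3(q-1)-h_1$ (from the step above) and $p\leq h_1+1$ (which follows from $d_T(u_1)\leq d_{G-S}(u_1)=h_1$). Because $k-h_1\geq 1$, the coefficient $-(k-h_1)$ multiplying $p$ is negative, so the inequality $p\leq h_1+1$ goes the right way. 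This gives
\[
q\gge k\bg{k+3(q-1)-h_1}-(k-h_1)(h_1+1)+2\=(k-h_1)(k-h_1-1)+3k(q-1)+2.
\]

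Since $k-h_1\geq 1$, the term $(k-h_1)(k-h_1-1)$ is nonnegative, so
\[
q\gge 3k(q-1)+2,
\]
equivalently $(3k-1)q\leq 3k-2$. For $k\geq 1$ this forces $q<1$, contradicting $q\geq 2$ from Claim~\ref{claim-q2}. Hence no such $x$ exists, i.e.\ $V(U)\subset N_G(u_1)$.

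The only step that requires any care is lining up the inequalities in the correct direction: in particular one needs $-(k-h_1)p\geq -(k-h_1)(h_1+1)$ and $k\cdot 3(q-1)\geq 0$ so that the two substitutions both weaken the bound monotonically in the same direction. Once these signs are verified the contradiction is immediate, and this step should not be a serious obstacle; the real work has already been done in Claims~\ref{claim-q2} and~\ref{cl-s}.
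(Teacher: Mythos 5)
Your proof is correct and takes essentially the same route as the paper: both assume a non-neighbour $x\in V(U)$ of $u_1$, invoke Claim~\ref{cl-s} for $s\geq k+3(q-1)-h_1$, feed this together with $p\leq h_1+1$, $h_2\geq k$ and $t\geq p$ into the bound $q\geq ks-kt+\sum_{x\in T}d_{G-S}(x)+2$, and derive a contradiction. The only difference is cosmetic bookkeeping at the end — you isolate $q$ and use $(k-h_1)(k-h_1-1)\geq 0$ to force $q<1$, whereas the paper keeps the inequality in the form $0\geq\cdots\geq 3k$ via a monotonicity argument in $h_1$; your version is, if anything, slightly cleaner.
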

\begin{proof}
Suppose not. By Claim \ref{cl-s}, (\ref{3s-bound1}) holds.
Thus,
\begin{align*}
0&\geq 2+ks-kt+\sum_{x\in T}d_{G-S}(x)-q\\
&\geq 2+k(3(q-1)+k-h_1)-kt+h_1p+h_2(t-p)-q  \quad \mbox{(by~(\ref{3s-bound1}))}\\
&\geq 2+k(3(q-1)+k-h_1)+(h_1-k)(h_1+1)-q\\
&\geq k(3+k-h_1)+(h_1-k)(h_1+1)\\
&=h_1^2-(2k-1)h_1+k^2+2k\\
&\geq 3k,
\end{align*}
a contradiction. Notice that in the above, we have used the facts
$h_2\geq k$, $t\geq p$, $h_1\leq k-1$ and $p\leq h_1+1$
in the third step; and the facts that the function $f(q)=3k(q-1)-q$
is increasing and $q\geq 2$ (by Claim \ref{claim-q2})
in the fourth step; and the fact that the function
$f(h_1)=h_1^2-(2k-1)h_1+k^2+2k$ is decreasing when $h_1\leq k-1$ in
the last step.

The proof of this claim is complete.
\end{proof}
By Claim \ref{claim-Uu1}, $V(U)\subset N_G(u_1)$.
So, $h_1\geq 3q+p-1$. We have
\begin{align*}
0&\geq 2+ks-kt+\sum_{x\in T}d_{G-S}(x)-q\\
&\geq 2+k(k-h_1)-kt+h_1p+h_2(t-p)-q\quad \mbox{(by~the~fact~$s+h_1\geq k$)}\\
&\geq 2+k(k-h_1)+(h_1-k)p-q\quad \mbox{(by~(\ref{kst}))}\\
&\geq 2+(k-h_1)(k-p)-q\\
&\geq 2+(k-h_1)(k-h_1+3q-1)-q\\
&\geq (k-h_1)(k-h_1+5)\quad \mbox{(since $q\ge 2$)}\\
&>0,
\end{align*}
a contradiction. This proves the case.

\case  $0\leq h_1\leq h_2\leq k-1$.

Since $u_1u_2\notin E(G)$, it follows that
\begin{align*}
n+k-2\leq d(u_1)+d(u_2)\leq h_1+h_2+2s,
\end{align*}
i.e.,
\begin{align}\label{4s-bound1}
s\geq \frac{1}{2}(n+k-2-h_1-h_2).
\end{align}
Since $|C_i|\geq 3$, one may see that
\begin{align}\label{4n-bound}
n\geq s+t+3q.
\end{align}
We can get
\begin{align*}
0&\geq ks-kt+h_1p+h_2(t-p)+2-q\\
&=ks-(k-h_2)t+(h_1-h_2)p+2-q\\
&\geq ks-(k-h_2)(n-s-3q)+(h_1-h_2)p+2-q\quad \mbox{(by (\ref{4n-bound}))}\\
&\geq (2k-h_2)s-(k-h_2)n+q(3(k-h_2)-1)+(h_1-h_2)(h_1+1)+2\mbox{~(since~$p\leq h_1+1,h_1\leq h_2$)}\\
&\geq (2k-h_2)s-(k-h_2)n+(h_1-h_2)(h_1+1)+2,
\end{align*}
i.e.,
\begin{align}\label{4tutte}
0\geq (2k-h_2)s-(k-h_2)n+(h_1-h_2)(h_1+1)+2.
\end{align}

First suppose that
\begin{align}\label{h1-h2}
h_1-h_2\geq k+2-n.
\end{align}
One can see that
\begin{align*}
0&\geq \frac{1}{2}(2k-h_2)(n+k-2-h_1-h_2)-(k-h_2)n+(h_1-h_2)(h_1+1)+2\quad \mbox{(by (\ref{4s-bound1}))}\\
&=h_1^2-h_1(k-1+\frac{h_2}{2})+\frac{h_2^2}{2}+\frac{1}{2}(n-3k)h_2+k^2-2k+2\\
&\geq h_1^2-h_1(k-1+\frac{h_2}{2})+\frac{h_2^2}{2}+\frac{1}{2}\left((k+2+h_2-h_1)-3k\right)h_2+k^2-2k+2\ \mbox{~~~(by (\ref{h1-h2}))}\\
&=h_1^2-h_1(k-1+h_2)+h_2^2+(-k+1)h_2+k^2-2k+2,
\end{align*}
i.e.,
\begin{align}\label{0>=f}
0\geq h_1^2-h_1(k-1+h_2)+h_2^2+(-k+1)h_2+k^2-2k+2.
\end{align}
Let $f(h_1,h_2,k)=h_1^2-h_1(k-1+h_2)+h_2^2+(-k+1)h_2+k^2-2k+2$.
Consider the following non-linear programming problem:
\begin{equation}\label{NLP}
 \left
\{\begin{array}{ll}
              \min \quad f(h_1,h_2,k),  & \hbox{}\\
             s.t.\quad  h_1-h_2\leq 0, & \hbox{} \\
               \quad\quad\ h_2\leq k-1, & \hbox{}\\
              \quad\quad\ -h_1\leq 0, & \hbox{}
             \end{array}
           \right.
\end{equation}
The Hessian matrix of the function $f(h_1,h_2,k)$ is
\begin{equation*}
M=\left(
    \begin{array}{ccc}
      2 & -1 & -1 \\
      -1 & 2 & -1 \\
      -1 & -1 & 2 \\
    \end{array}
  \right).
\end{equation*}
Note that $M$ is a positive semi-definite matrix. By
Theoerem \ref{Th-Convex}, $f(h_1,h_2,k)$ is a convex function. Thus
(\ref{NLP}) is a convex optimization problem. Its Lagrangian function is
\begin{align*}
L(h,\lambda)=&h_1^2-h_1(k-1+h_2)+h_2^2+(-k+1)h_2+k^2-2k+2+\lambda_1(h_1-h_2)\\
&+\lambda_2(h_2-k+1)+\lambda_3(-h_1).
\end{align*}
Hence the Karush-Kuhn-Tucker condition of (\ref{NLP}) is
\begin{equation}\label{KT-Eq}
  \left\{
     \begin{array}{ll}
        2h_1- (k-1+h_2)+\lambda_1-\lambda_3=0, & \hbox{} \\
       -h_1+2h_2+(-k+1)-\lambda_1+\lambda_2=0, & \hbox{} \\
       -h_1-h_2+2k-2-\lambda_2=0, & \hbox{} \\
       \lambda_1(h_1-h_2)=0, & \hbox{} \\
       \lambda_2(h_2-k+1)=0, & \hbox{} \\
       \lambda_3h_1=0. &
     \end{array}
   \right.
\end{equation}

It is easy to see that $h_1=h_2=k-1$ and $\lambda_1=\lambda_2=\lambda_3=0$
is a solution of the equation (\ref{KT-Eq}).
For a convex optimization problem, by Theorem \ref{Th-KKT},
every solution satisfying its KKT condition is also its
optimum solution. Thus, we have
\begin{align*}
  f(h_1,h_2,k)\ge f(k-1,k-1,k)=1,
\end{align*}
contradicting $(\ref{0>=f})$.

Finally, suppose that
\begin{align*}
h_2-h_1>n-k-2.
\end{align*}

By (\ref{4tutte}) and (\ref{cl:sh1k}), one can see that
\begin{align*}
0&\geq (2k-h_2)s-(k-h_2)n+(h_1-h_2)(h_1+1)+2\\
&\geq (2k-h_2)(k-h_1)-(k-h_2)n+(h_1-h_2)(h_1+1)+2\\
&=h_1^2-(2k-1)h_1+2k^2-kn+2+h_2(n-k-1)\\
&\geq h_1^2-(2k-1)h_1+2k^2-kn+2+(n-k-1)^2+(n-k-1)h_1\\
&=h_1^2-(3k-n)h_1+2k^2-kn+2+(n-k-1)^2\\
&\geq -\frac{1}{4}(3k-n)^2+2k^2-kn+2+(n-k-1)^2\\
&=-\frac{1}{4}(n-k)^2+(n-k-1)^2+2\\
&=\frac{3}{4}(n-k)^2-2(n-k)+3\\
&>0,
\end{align*}
a contradiction.
This completes the proof of Theorem
\ref{Thkfactor}.  {\hfill$\Box$}

\noindent
{\bf Proof of Theorem \ref{ThWinConj}.} By Theorem \ref{Thkfactor},
$G$ contains a $k$-factor, denoted by $H$, where
$k\geq n/2\geq 2\lceil n/4\rceil-1$. Obviously, $H$ is
$k$-regular. Since the order of $G$ is sufficiently large,
the order of $H$ is also sufficiently large. By Theorem \ref{Th:Csabeetal},
$H$ can be decomposed into $k$ disjoint perfect matchings.
The proof of Theorem \ref{ThWinConj} is completed.  {\hfill$\Box$}

\section*{Acknowledgements}
The first author is supported by NSFC (No.\ 11471257).
The second author is supported by NSFC (Nos.\ 11601379 and \ 11971346).

\end{sloppypar}
\end{document}